\newcommand{\R}{\mathbb{R}}
\newcommand{\scrA}{\mathscr{A}}
\newcommand{\scrD}{\mathscr{D}}
\newcommand{\scrF}{\mathscr{F}}
\newcommand{\scrG}{\mathscr{G}}
\DeclareMathOperator{\spn}{span}
\DeclareMathOperator{\rank}{rank}
\begin{document}
\title{Virtual Affine Nonholonomic Constraints\thanks{E. Stratoglou and A. Anahory have contributed equally. The authors acknowledge financial support from Grant PID2019-106715GB-C21 funded by MCIN/AEI/ 10.13039/501100011033 and the LINC Global project from CSIC "Wildlife Monitoring Bots" INCGL20022. A.B. was partially supported by NSF grants DMS-1613819 and DMS-2103026, and AFOSR grant FA
9550-22-1-0215.}}

\author{Efstratios Stratoglou\inst{1*} \and
Alexandre Anahory Simoes\inst{2*} \and
Anthony Bloch\inst{3}\and
Leonardo Colombo\inst{4}}
\authorrunning{E. Stratoglou et al.}
%
\institute{Universidad Polit\'ecnica de Madrid (UPM), José Gutiérrez Abascal, 2, 28006 Madrid, Spain.\email{ef.stratoglou@alumnos.upm.es } \and
School of Science and Technology, IE University, Spain.
\email{alexandre.anahory@ie.edu}\\ \and
Department of Mathematics, University of Michigan, Ann Arbor, MI 48109, USA.\\
\email{abloch@umich.edu}\and Centre for Automation and Robotics (CSIC-UPM), Ctra. M300 Campo Real, Km 0,200, Arganda
del Rey - 28500 Madrid, Spain.\email{leonardo.colombo@csic.es}}
\maketitle              
\begin{abstract}
Virtual constraints are relations imposed in a control system that become invariant via feedback, instead of real physical constraints acting on the system.  Nonholonomic systems are mechanical systems with non-integrable constraints on the velocities. In this work, we introduce the notion of \textit{virtual affine nonholonomic constraints} in a geometric framework. More precisely, it is a controlled invariant affine distribution associated with an affine connection mechanical control system. We show the existence and uniqueness of a control law defining a virtual affine nonholonomic constraint.
\keywords{Nonholonomic Systems  \and Virtual Constraints \and Nonlinear Control.}
\end{abstract}
\section{Introduction}
Virtual constraints are relations on the configuration variables of a control system which are imposed through feedback control and the action of actuators, instead of through physical connections such as gears or contact conditions with the environment. The class of virtual holonomic constraints became popular in applications to biped locomotion where it was used to express a desired walking gait, as well as for motion planning to search for periodic orbits and its employment in the technique of transverse linearization to stabilize such orbits.

Virtual nonholonomic constraints are a class of virtual constraints that depend on velocities rather than only on the configurations of the system. Those constraints were introduced in \cite{griffin2015nonholonomic} to design a velocity-based swing foot placement in bipedal robots. In particular, this class of virtual constraints has been used in \cite{hamed2019nonholonomic,horn2020nonholonomic,horn2018hybrid,horn2021nonholonomic} to encode velocity-dependent stable walking gaits via momenta conjugate to the unacatuated degrees of freedom of legged robots and prosthetic legs.  

 The recent work \cite{moran2021energy} intorduces an approach to defining rigorously virtual nonholonomic constraints, but the nonlinear nature of the constraints makes difficult a thorough mathematical analysis. In the work \cite{virtual}, we provide a formal definition of linear virtual nonholonomic constraints, i.e., constraints that are linear on the velocities. Our definition is based on the invariance property under the closed-loop system and coincides with that of \cite{moran2021energy} in the linear case. In this paper we extend the results of \cite{virtual} to the case of affine constraints on the velocities.

In particular, a virtual affine nonholonomic constraint is described by an affine non-integrable distribution on the configuration manifold of the system for which there is a feedback control making it invariant under the flow of the closed-loop system. We provide conditions for the existence and uniqueness of such a feedback law defining the virtual affine nonholonomic constraint.

The remainder of the paper is structured as follows. Section \ref{sec2} introduces nonholonomic systems with affine constraints. We define virtual nonholonomic constraints in Section \ref{sec:controler}, where we provide conditions for the existence and uniqueness of a control law defining a virtual nonholonomic constraint.  In Section \ref{sec4}, we provide an example to illustrate the theoretical results.

\section{Nonholonomic systems with affine constraints}\label{sec2}
We begin with a Lagrangian system on an $n$-dimensional configuration space $Q$ and Lagrangian $L:TQ\to\mathbb{R}$. We assume the Lagrangian has the mechanical form
\begin{equation}\label{lagrangian}
    L=K-V\circ\pi,
\end{equation} where $K$ is a function on $TQ$ describing the kinetic energy of the system, that is, $K=\frac{1}{2}\scrG(v_q, v_q)$, where $\scrG$ is the Riemannian metric on $Q$, $V:Q\to\mathbb{R}$ is a function on $Q$ representing the potential energy, and $\pi:TQ\to Q$ is the tangent bundle projection, locally given by $\pi(q,\dot{q})=q$ with $(q,\dot{q})$ denoting local coordinates on $TQ$. In addition, we denote by $\mathfrak{X}(Q)$ the set of vector fields on $Q$ and by $\Omega^{1}(Q)$ the set of $1$-forms on $Q$. If $X, Y\in\mathfrak{X}(Q),$ then
$[X,Y]$ denotes the standard Lie bracket of vector fields.

Consider affine nonholonomic constraints, that is, for each $q\in Q$ the velocities belong to an affine subspace $\scrA_q$ of the tangent space $T_qQ.$ Thus, $\scrA_q$ can be written as a sum of a vector field $X\in\mathfrak{X}(Q)$ and a nonintegrable distribution $\scrD$ on $Q$, i.e. $\scrA_q=X(q)+\scrD_q$, where $\scrD$ is of constant rank $r$, with $1<r<n$. In this case, we say that the affine space $\scrA_q$ is modelled on the vector subspace $\scrD_q$. In local coordinates $\scrD$ can be expressed as the null space of a $q$-dependent matrix $S(q)$ of dimension $m\times n$ and $\rank S(q)=m,$ with $m=n-r$ as $\scrD_q=\{\dot{q}\in T_qQ : S(q)\dot{q}=0\}$. The rows of $S(q)$ can be represented by the coordinate functions of $m$ independent 1-forms $\mu^b=\mu^b_idq^i$, $1\leq i\leq n, \; 1\leq b\leq m$.
The affine distribution is $\scrA_q=\{\dot{q}\in T_qQ : S(q)(\dot{q}-X(q))=0\}$, hence  $\scrA=\{(q,\dot{q})\in TQ : \phi(q,\dot{q})=0\}$, with $\phi(q,\dot{q})=S(q)\dot{q}+Z(q)$ and $Z(q)=-S(q)X(q)\in\R^m$ (see \cite{Sansonetto}, \cite{Sansonetto2} for instance).

\begin{definition}\label{nonholonomicsystem}
A mechanical system with \textit{affine nonholonomic constraints} on a smooth manifold $Q$ is given
by the triple $(\scrG, V, \scrA)$, where $\scrG$ is
a Riemannian metric on $Q,$ representing the kinetic energy of the
system, $V:Q\rightarrow\mathbb{R}$ is a smooth function representing the potential
energy, and $\scrA$ an affine distribution on $Q$
describing the affine nonholonomic constraints.
\end{definition}

In any Riemannian manifold, there is a unique connection $\nabla^{\scrG}:\mathfrak{X}(Q)\times \mathfrak{X}(Q) \rightarrow \mathfrak{X}(Q)$ called the \textit{Levi-Civita connection} satisfying the following two properties:
\begin{enumerate}
\item $[ X,Y]=\nabla_{X}^{\scrG}Y-\nabla_{Y}^{\scrG}X$ (symmetry)
\item $X(\scrG(Y,Z))=\scrG(\nabla_{X}^{\scrG}(Y,Z)+\scrG(Y,\nabla_{X}^{\scrG}Z)$ (compatibillity of the metric).
\end{enumerate}

The Levi-Civita connection helps us describe the trajectories of a mechanical Lagrangian system. The trajectories $q:I\rightarrow Q$ of a mechanical Lagrangian determined by a Lagrangian function as in \eqref{lagrangian} satisfy the following equations
\begin{equation}\label{ELeq}
    \nabla_{\dot{q}}^{\scrG}\dot{q} + \text{grad}_{\scrG}V(q(t)) = 0,
\end{equation}
where the vector field $\text{grad}_{\scrG}V\in\mathfrak{X}(Q)$ is characterized by $\scrG(\text{grad}_{\scrG}V, X) = dV(X), \; \mbox{ for  every } X \in
\mathfrak{X}(Q)$. Observe that if the potential function vanishes, then the trajectories of the mechanical system are just the geodesics with respect to the connection $\nabla^{\scrG}$.

\section{Virtual affine nonholonomic constraints}\label{sec:controler}

In this section we present a detailed construction of virtual affine nonholonomic constraints. As will be clear from the definition of virtual affine nonholonomic constraints their existence is essentially linked to a controlled system, rather than to the affine distribution $\scrA$ defined by the constraints. As a consequence we give the necessary tools for controlled mechanical systems.

Given a Riemannian metric $\scrG$ on $Q$, we can use its non-degeneracy property to define the musical isomoprhism $\flat:\mathfrak{X}(Q)\rightarrow \Omega^{1}(Q)$ defined by $\flat(X)(Y)=\scrG(X,Y)$ for any $X, Y \in \mathfrak{X}(Q)$. Also, denote by $\sharp:\Omega^{1}(Q)\rightarrow \mathfrak{X}(Q)$ the inverse musical isomorphism, i.e., $\sharp=\flat^{-1}$.

Given an external force $F^{0}:TQ\rightarrow T^{*}Q$ and a control force $F:TQ\times U \rightarrow T^{*}Q$ of the form
\begin{equation}
    F(q,\dot{q},u) = \sum_{a=1}^{m} u_{a}f^{a}(q)
\end{equation}
where $f^{a}\in \Omega^{1}(Q)$ with $m<n$, $U\subset\mathbb{R}^{m}$ the set of controls and $u_a\in\mathbb{R}$ with $1\leq a\leq m$ the control inputs, consider the associated mechanical control system of the form
\begin{equation}\label{mechanical:control:system}
    \nabla^{\scrG}_{\dot{q}(t)} \dot{q}(t) =Y^{0}(q(t),\dot{q}(t)) + u_{a}(t)Y^{a}(q(t)),
\end{equation}
with $Y^{0}=\sharp(F^{0})$ and $Y^{a}=\sharp(f^{a})$ the corresponding force vector fields.

Hence, $q$ is the trajectory of a vector field of the form
\begin{equation}\label{SODE}\Gamma(v_{q})=G(v_{q})+u_{a}(Y^{a})_{v_{q}}^{V},\end{equation}
where $G$ is the vector field determined by the unactuated forced mechanical system
\begin{equation*}
    \nabla^{\scrG}_{\dot{q}(t)} \dot{q}(t) =Y^{0}(q(t),\dot{q}(t))
\end{equation*}
and where the vertical lift of a vector field $X\in \mathfrak{X}(Q)$ to $TQ$ is defined by $$X_{v_{q}}^{V}=\left. \frac{d}{dt}\right|_{t=0} (v_{q} + t X(q)).$$

\begin{definition}
    The distribution $\mathcal{F}\subseteq TQ$ generated by the vector fields  $\sharp(f_{i})$ is called the \textit{input distribution} associated with the mechanical control system \eqref{mechanical:control:system}.
\end{definition}

Now we will introduce the concept of virtual affine nonholonomic constraint.

\begin{definition}
A \textit{virtual affine nonholonomic constraint} associated with the mechanical control system \eqref{mechanical:control:system} is a controlled invariant affine distribution $\scrA\subseteq TQ$ for that system, that is, 
there exists a control function $\hat{u}:\scrA\rightarrow \mathbb{R}^{m}$ such that the solution of the closed-loop system satisfies $\psi_{t}(\scrA)\subseteq \scrA$, where $\psi_{t}:TQ\rightarrow TQ$ denotes its flow.
\end{definition}

Before we proceed to the theorem which gives the necessary conditions for the existence and uniqueness of a control law that turns an affine distribution into a controlled invariant affine distribution (virtual affine nonholonomic constraint), we present some necessary preliminaries.

\begin{definition}
    If $W$ is an affine subspace of the vector space $V$ modelled on the vector subspace $W_{0}$, then the \textit{dimension} of the affine subspace $W$ is defined to be the dimension of the model vector subspace $W_{0}$.
\end{definition}

Two affine subspaces $W_1$ and $W_2$ of a vector space $V$ are \textit{transversal} and we write $W_1\pitchfork W_2$ if 
\begin{enumerate}
    \item $V=W_1+W_2$.
    \item $\dim V=\dim W_1+\dim W_2$, i.e., the dimensions of $W_1$ and $W_2$ are complementary with respect to the ambient space dimension. 
\end{enumerate}
\begin{remark}
    If $W_1$ and $W_2$ are subspaces of $V$ then the previous definition implies that $V=W_{1}\oplus W_{2}$.
\end{remark}
\begin{remark}
    If $W_1$ and $W_2$ are affine spaces modelled on vector subsspaces $W_{10}$ and $W_{20}$, respectively, then $W_{1}\pitchfork W_2$ if and only if $V=W_{10}\oplus W_{20}$.
\end{remark}

\begin{proposition}\label{inhereted transversality_prop}
    For two distributions $\scrA$ and $\scrF$ on a manifold $Q$ where $\scrA$ is an affine distribution with $\scrD$ the associated model distribution and $X$ a vector field on $Q$ satisfying $\scrA=X+\scrD$, the tranversality condition for $\scrA$ and $\scrF$ is an inherited property from the transversality of the model distribution $\scrD$ and vice versa, namely,
    $\scrA \pitchfork \scrF \Leftrightarrow \scrD \pitchfork \scrF$.
\end{proposition}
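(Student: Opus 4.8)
The plan is to reduce both transversality conditions to a single pointwise condition on the model distributions, using the characterization supplied by the second remark. Since transversality of distributions is a fiberwise notion, I would fix an arbitrary point $q \in Q$ and work entirely inside the vector space $V = T_qQ$, comparing the affine subspaces $\scrA_q$, $\scrD_q$ and $\scrF_q$.

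First I would record the relevant model subspaces. By definition the affine subspace $\scrA_q = X(q) + \scrD_q$ is modelled on $\scrD_q$, while $\scrD_q$, being a vector subspace, is an affine subspace modelled on itself. Likewise $\scrF_q$, regarded as an affine subspace, is modelled on a vector subspace that I denote $\scrF_{0,q}$ (this equals $\scrF_q$ itself whenever $\scrF$ is a linear distribution, as is the case for the input distribution).

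The key step is to invoke the second remark, according to which the transversality of two affine subspaces depends only on their model subspaces: $W_1 \pitchfork W_2$ holds exactly when $V = W_{10} \oplus W_{20}$. Applying this to the pair $(\scrA_q, \scrF_q)$ shows that $\scrA_q \pitchfork \scrF_q$ is equivalent to $T_qQ = \scrD_q \oplus \scrF_{0,q}$, since $\scrA_q$ and $\scrF_q$ are modelled on $\scrD_q$ and $\scrF_{0,q}$ respectively. Applying the same remark to $(\scrD_q, \scrF_q)$, where $\scrD_q$ is modelled on itself, yields the identical condition $T_qQ = \scrD_q \oplus \scrF_{0,q}$. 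The two characterizations coincide, so $\scrA_q \pitchfork \scrF_q \Leftrightarrow \scrD_q \pitchfork \scrF_q$; as $q$ was arbitrary and transversality is pointwise, this gives $\scrA \pitchfork \scrF \Leftrightarrow \scrD \pitchfork \scrF$.

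There is no genuine obstacle here: the content of the proposition is essentially a restatement of the second remark, together with the observation that $\scrA$ and its model $\scrD$ share the same model subspace $\scrD_q$ at every point. Should one prefer to argue directly from the two-part definition rather than citing the remark, the only point needing a line of justification is that the translate $X(q)$ is irrelevant to the spanning condition: the affine sum $\scrA_q + \scrF_q$ has model subspace $\scrD_q + \scrF_{0,q}$, and an affine subspace of $T_qQ$ fills all of $T_qQ$ precisely when its model subspace does, so condition (1) for $\scrA_q$ and for $\scrD_q$ agree; condition (2) is immediate from $\dim \scrA_q = \dim \scrD_q$.
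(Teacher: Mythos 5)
Your proposal is correct, but your primary route differs from the paper's. You reduce both sides of the equivalence to the single condition $T_qQ=\scrD_q\oplus\scrF_q$ by invoking the paper's second remark (transversality of affine subspaces depends only on their model subspaces), applied once to $(\scrA_q,\scrF_q)$ and once to $(\scrD_q,\scrF_q)$. That is logically clean and arguably the ``right'' level of abstraction, but note that the remark is stated in the paper without proof, and the proposition is essentially a special case of it; so your main argument shifts the entire mathematical content onto an unproven assertion rather than discharging it. The paper instead gives a self-contained, elementwise verification: it shows $T_qQ=\scrA_q+\scrF_q$ if and only if $T_qQ=\scrD_q+\scrF_q$ by writing an arbitrary $v_q$ in one decomposition and translating by $\pm X(q)$ to obtain the other, then appeals to the (unchanged) dimension count $\dim\scrD_q+\dim\scrF_q=\dim T_qQ$. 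Your closing paragraph --- observing that the translate $X(q)$ is irrelevant to the spanning condition and that $\dim\scrA_q=\dim\scrD_q$ handles condition (2) --- is precisely this direct argument in compressed form, so your fallback coincides with the paper's proof. What your approach buys is brevity and reusability (the remark settles all such statements at once); what the paper's buys is that the proposition stands on its own without leaning on the unproved remark.
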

\begin{proof}
    First consider that $\scrA \pitchfork \scrF$ which means that for every $q\in Q$ we have \[T_qQ=\scrA_q + \scrF_q = X(q)+\scrD_q+\scrF_q\]
    hence, for every $v_q\in T_qQ$ there exist vectors $d_q \in\scrD_q$ and $f_q\in\scrF_q$ such that \[v_q=X(q)+d_q+f_q\;\Leftrightarrow\; v_q-X(q)=d_q+f_q.\]
    Since $v_q-X(q) \in T_qQ$ and $v_q$ is arbitrary we have $T_qQ=\scrD_q+\scrF_q$ for every $q\in Q$. Together with the fact that $\dim \scrD_q+ \dim \scrF_q = \dim T_qQ$, we have that $\scrD\pitchfork\scrF.$
    \\Now suppose that $\scrD\pitchfork\scrF$. Note that this is the same as $TQ = \scrD\oplus\scrF$. Hence, as before, for $v_q\in T_qQ$ there are $d_q \in\scrD_q$ and $f_q\in\scrF_q$ such that \[v_q=d_q+f_q\;\Leftrightarrow\; v_q+X(q)=X(q)+d_q+f_q.\]
    By the same argument as above, $v_q+X(q)\in T_qQ$ and $v_q$ is arbitrary, thus, together with the dimension condition, we conclude that $\scrA\pitchfork\scrF.$ 
\end{proof}

\begin{proposition}\label{TQ=A+F to TTQ=TA+TF_ prop}
    Consider two distributions $\scrA$ and $\scrF$ where the first is an affine distribution as defined previously i.e. $\scrA=X + \scrD$, with $X\in\mathfrak{X}(Q)$ and $\scrD$ its associated model distribution. For $v_q\in\scrA$ we have \[\scrA\pitchfork\scrF \Longrightarrow T_{v_q}(TQ)=T_{v_q}\scrA \oplus \scrF^V_{v_q},\] where $\scrF^V_{v_q}$ is the vertical lift of $\scrF_{v_q}.$ 
\end{proposition}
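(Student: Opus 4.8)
The plan is to establish the direct-sum decomposition by a dimension count combined with a trivial-intersection argument, since both summands live inside the $2n$-dimensional space $T_{v_q}(TQ)$. First I would record the relevant dimensions. If $\scrD$ has rank $r$, then $\scrA$, viewed as a submanifold of $TQ$, is an affine subbundle over $Q$ with $r$-dimensional affine fibres $\scrA_q$, so $\dim T_{v_q}\scrA = n + r$. The hypothesis $\scrA \pitchfork \scrF$, rewritten via Proposition~\ref{inhereted transversality_prop} as $\scrD \pitchfork \scrF$, forces $\dim \scrF_q = n - r$ and hence $\dim \scrF^V_{v_q} = n - r$. Thus $\dim T_{v_q}\scrA + \dim \scrF^V_{v_q} = 2n = \dim T_{v_q}(TQ)$, and it suffices to prove $T_{v_q}\scrA \cap \scrF^V_{v_q} = \{0\}$.

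The key geometric step is to identify the vertical part of $T_{v_q}\scrA$. Since $\scrF^V_{v_q}$ consists of vertical vectors, i.e. it lies in the vertical subspace $V_{v_q}(TQ) = \ker d\pi_{v_q}$ with $\pi:TQ\to Q$, any $\xi$ in the intersection is vertical and tangent to $\scrA$, hence tangent to the fibre $\scrA_q = \pi^{-1}(q)\cap\scrA$. Because $\scrA_q$ is the affine subspace $X(q)+\scrD_q$ of $T_qQ$, its tangent space at $v_q$, identified with vertical vectors through the vertical lift, is exactly the vertical lift of the model space $\scrD_q$; the affine translation by $X(q)$ is invisible at the level of tangent spaces. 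Therefore $T_{v_q}\scrA \cap V_{v_q}(TQ) = \scrD^V_{v_q}$.

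It then remains to intersect. Any $\xi \in T_{v_q}\scrA \cap \scrF^V_{v_q}$ can be written both as $w^V_{v_q}$ with $w\in\scrD_q$, by the previous step, and as $u^V_{v_q}$ with $u\in\scrF_q$; injectivity of the vertical lift gives $w = u \in \scrD_q \cap \scrF_q$, and $\scrD\pitchfork\scrF$ (equivalently $T_qQ=\scrD_q\oplus\scrF_q$) yields $w=0$, so $\xi=0$. Combined with the dimension count this proves the claimed decomposition $T_{v_q}(TQ)=T_{v_q}\scrA \oplus \scrF^V_{v_q}$.

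The main obstacle I anticipate is the second step: one must argue precisely that the vertical subspace of $T_{v_q}\scrA$ is the lift of the \emph{model} distribution $\scrD$ rather than of $\scrA$ itself, so that the affine offset $X(q)$ drops out and the transversality of $\scrD$ and $\scrF$ can be invoked through Proposition~\ref{inhereted transversality_prop}. Everything else is bookkeeping of dimensions together with the injectivity of the vertical lift.
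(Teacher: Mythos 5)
Your proof is correct and follows essentially the same strategy as the paper's: a dimension count ($\dim T_{v_q}\scrA = n+r$, $\dim\scrF^V_{v_q}=n-r=m$) reducing the claim to triviality of the intersection, which is then settled by showing that a vertical vector tangent to $\scrA$ must be the vertical lift of an element of the model distribution $\scrD_q$, so that $\scrD_q\cap\scrF_q=\{0\}$ from Proposition~\ref{inhereted transversality_prop} finishes the argument. The only difference is presentational: the paper identifies the vertical part of $T_{v_q}\scrA$ by computing $T_{v_q}\phi$ on vertical lifts in coordinates (via the matrix $S(q)$), whereas you argue intrinsically through the affine fibre structure of $\scrA$ over $Q$; both are the same underlying fact.
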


\begin{proof}

   From the structure of $\scrA$, i.e., from the fact that each $v_q\in\scrA_{q}$ can be written as $v_q=Z(q)+d_q$ where $d_q\in\scrD_q$, we may conclude that $\scrA_{q}$ is a $r$-dimensional manifold, where $r$ is the rank of the distribution $\scrD$. Thus $\scrA$ is a fiber bundle whose base space is the $n$ dimensional manifold $Q$ and whose fibers are $r$ dimensional affine subspaces. Hence,
    \[\dim(T_{v_q}\scrA)=\dim(T_{d_q}\scrD)=n+r\] and since $\dim\scrF^V_{v_q}=n-r=m$, we have that \[ \dim T_{v_q}(TQ)=\dim T_{v_q}\scrA + \dim \scrF^V_{v_q}=n+r + m = 2n.\]
    So, in order to prove that both subspaces are transversal it suffices to prove that their intersection contains only the zero tangent vector.
    Indeed, suppose that $v_{q}\in \scrA$ and $X_{v_{q}} \in T_{v_{q}}\scrA$. Since $\scrA$ is defined to be the set of vectors satisfying the equation $\phi=0$, the tangent vector satisfies $T_{v_{q}}\phi(X_{v_{q}})=0$. If, in addition, $X_{v_{q}} \in \scrF^V_{v_{q}}$, then it can be written as
    $$X_{v_{q}} = c^{i} \sharp(f_{i})^V_{v_{q}}.$$
    However, $$T_{v_{q}}\phi(\sharp(f_{i})^V_{v_{q}})=(S(q)\sharp(f_{i}))^V_{v_{q}}$$ from where it follows that if $c^{i} \sharp(f_{i})^V_{v_{q}}$ was in the null space of the linear map $T_{v_{q}}\phi$, then $c^{i}\sharp(f_{i})$ would be in the null space of $S(q)$ which is false, since these are vectors in $\scrD_{q}$ and $\scrF$ and $\scrD$ are transversal using the previous proposition. Thus $X_{v_{q}}=0$.
    
\end{proof}

\begin{theorem}\label{main theorem}
    If the affine distribution $\scrA$ and the control input distribution $\scrF$ are transversal, then there exists a unique control function making the distribution a virtual affine nonholonomic constraint associated with the mechanical control system \eqref{mechanical:control:system}.
\end{theorem}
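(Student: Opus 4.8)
The plan is to reduce the geometric invariance requirement to a pointwise linear algebra problem that is solvable precisely because of transversality. First I would recall the standard fact that an embedded submanifold is invariant under the flow of a vector field if and only if that vector field is everywhere tangent to it. Since the constant-rank hypothesis on $\scrD$ guarantees that $\scrA=\{\phi=0\}$ is an embedded submanifold of $TQ$, the controlled invariance condition $\psi_t(\scrA)\subseteq\scrA$ is equivalent to demanding that the closed-loop vector field $\Gamma$ from \eqref{SODE} be tangent to $\scrA$ at every $v_q\in\scrA$, i.e. $T_{v_q}\phi(\Gamma(v_q))=0$.

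Next I would substitute the explicit form $\Gamma(v_q)=G(v_q)+u_a(Y^a)^V_{v_q}$, with $Y^a=\sharp(f^a)$, and use linearity of $T_{v_q}\phi$ to obtain
$$T_{v_q}\phi(G(v_q)) + u_a\, T_{v_q}\phi\big((Y^a)^V_{v_q}\big) = 0.$$
The second term has already been computed inside the proof of Proposition \ref{TQ=A+F to TTQ=TA+TF_ prop}, where it is shown that $T_{v_q}\phi\big((Y^a)^V_{v_q}\big)=S(q)\sharp(f^a)=S(q)Y^a$. Collecting the coefficient vectors $S(q)Y^a\in\R^m$ into the columns of an $m\times m$ matrix $M(q)$, the tangency condition becomes a linear system $M(q)\,u=-\,T_{v_q}\phi(G(v_q))$ in the unknown control $u\in\R^m$.

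The decisive step, and the main obstacle, is to show that $M(q)$ is invertible, and this is exactly where the transversality hypothesis enters. By Proposition \ref{inhereted transversality_prop}, $\scrA\pitchfork\scrF$ is equivalent to $\scrD\pitchfork\scrF$, that is $T_qQ=\scrD_q\oplus\scrF_q$. Since $\scrD_q=\ker S(q)$ and $\scrF_q=\spn\{Y^1,\dots,Y^m\}$ is complementary to it, the restriction $S(q)|_{\scrF_q}$ is injective; as $\dim\scrF_q=m=\dim\R^m$, it is in fact an isomorphism onto $\R^m$. The image of $S(q)|_{\scrF_q}$ is precisely the column span of $M(q)$, so $M(q)$ has rank $m$ and is therefore invertible at every $q$.

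Finally I would conclude existence and uniqueness simultaneously: invertibility of $M(q)$ forces
$$\hat{u}(v_q) = -\,M(q)^{-1}\, T_{v_q}\phi(G(v_q))$$
to be the one and only control solving the tangency equation at each $v_q\in\scrA$. Smoothness of $M(q)^{-1}$, which follows from smoothness and pointwise invertibility of $M$, together with smoothness of $v_q\mapsto T_{v_q}\phi(G(v_q))$, shows that $\hat{u}:\scrA\rightarrow\R^m$ is a well-defined smooth feedback. By construction the resulting closed-loop field is tangent to $\scrA$, so $\scrA$ is rendered a controlled invariant affine distribution, i.e. a virtual affine nonholonomic constraint, and the control accomplishing this is unique.
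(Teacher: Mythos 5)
Your proposal is correct and follows essentially the same route as the paper: both arguments reduce controlled invariance to tangency of the closed-loop field $\Gamma$ to $\scrA=\{\phi=0\}$, arrive at the same $m\times m$ linear system (your matrix $M(q)$ with columns $S(q)Y^{a}$ is exactly the paper's $P(v_{q})$ with entries $\mu^{b}(Y^{a})$), and prove its invertibility from $\scrD_{q}\cap\scrF_{q}=\{0\}$ via Proposition \ref{inhereted transversality_prop}. The only cosmetic difference is that you obtain existence and uniqueness directly from the invertible linear system, whereas the paper first invokes the splitting $T_{v_{q}}(TQ)=T_{v_{q}}\scrA\oplus\scrF^{V}_{v_{q}}$ of Proposition \ref{TQ=A+F to TTQ=TA+TF_ prop} and uses the linear system only to establish smoothness.
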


\begin{proof}
    Suppose that $\scrA\pitchfork\scrF$ and that trajectories of the contol system \eqref{mechanical:control:system} may be written as the integral curves of the vector field $\Gamma$ defined by \eqref{SODE}. From Proposition \ref{TQ=A+F to TTQ=TA+TF_ prop} we have  $$T_{v_q}(TQ)=T_{v_q}\scrA \oplus \scrF^V_{v_q},$$ where $v_q\in\scrA$ and $\scrF^V_{v_q}=\spn\{(Y^a)^V_{v_q}\}$.
    Using the uniqueness decomposition property arising from transversality, we conclude there exists a unique vector $\tau^{*}(v_{q})=(\tau_{1}^{*}(v_{q}),\cdots, \tau_{m}^{*}(v_{q}))\in \mathbb{R}^{m}$ such that $\Gamma(v_{q})=G(v_{q})+\tau_{a}^{*}(v_{q})(Y^{a})_{v_{q}}^{V}\in T_{v_{q}}\scrA.$ Next, we show that $\Gamma$ depends smoothly on $v_q.$ If $\scrA$ is defined by $m$ constraints of the form $\phi^{b}(v_{q})=0$, \;$1\leq b\leq m$, then the condition above may be rewritten as $d\phi^{b}(G(v_{q})+\tau_{a}^{*}(v_{q})(Y^{a})_{v_{q}}^{V})=0,$ which is equivalent to $$\tau_{a}^{*}(v_{q})d\phi^{b}((Y^{a})_{v_{q}}^{V})=-d\phi^{b}(G(v_{q})).$$ Note that, the equation above is a linear equation of the form $P(v_{q})\tau=b(v_{q})$, where $b(v_{q})$ is the vector $(-d\phi^{1}(\Gamma(v_{q})), \dots, -d\phi^{m}(\Gamma(v_{q})))\in \mathbb{R}^{m}$ and $P(v_{q})$ is the $m\times m$ matrix with entries $P^{b}_{a}(v_{q})=d\phi^{b}((Y^{a})_{v_{q}}^{V})=\mu^{b}(q)(Y^{a})$, where the last equality may be deduced by computing the expressions in local coordinates. That is, if $(q^{i} \dot{q}^{i})$ are natural bundle coordinates for the tangent bundle, then

    \begin{equation*}
        \begin{split}
            d\phi^{b}((Y^{a})_{v_{q}}^{V}) & = \left(\frac{\partial \mu^{b}_{i}}{\partial q^{j}}\dot{q}^{i}dq^{j} + \frac{\partial Z_i}{\partial q^j}dq^j + \mu^{b}_{i}d\dot{q}^{i}\right)\left(Y^{a,k}\frac{\partial}{\partial \dot{q}^{k}}\right) \\
            & = \mu^{b}_{i}Y^{a,i} = \mu^{b}(q)(Y^{a}).
        \end{split}
    \end{equation*}
    
    In addition, $P(v_{q})$ has full rank, since its columns are linearly independent. In fact suppose that
    \begin{equation*}
        c_{1}\begin{bmatrix} \mu^{1}(Y^{1}) \\
        \vdots \\
        \mu^{m}(Y^{1}) \end{bmatrix} + \cdots + c_{m}\begin{bmatrix} \mu^{1}(Y^{m}) \\
        \vdots \\
        \mu^{m}(Y^{m}) \end{bmatrix}= 0,
    \end{equation*}
    which is equivalent to
    \begin{equation*}
        \begin{bmatrix} \mu^{1}(c_{1}Y^{1}+\cdots + c_{m}Y^{m}) \\
        \vdots \\
        \mu^{m}(c_{1}Y^{1}+\cdots + c_{m}Y^{m}) \end{bmatrix}.
    \end{equation*}
    However, from Proposition \ref{inhereted transversality_prop} we have $\scrD\cap \scrF = \{0\}$ which implies that $c_{1}Y^{1}+\cdots + c_{m}Y^{m}=0$. Since $\{Y_{i}\}$ are linearly independent we conclude that $c_{1}=\cdots=c_{m}=0$ and $P$ has full rank. But, since $P$ is an $m\times m$ matrix, and $\scrD$ is a regular distribution, it must be invertible. Therefore, there is a unique vector $\tau^{*}(v_{q})$ satisfying the matrix equation and $\tau^{*}:\scrD\rightarrow \mathbb{R}^{m}$ is smooth since it is the solution of a matrix equation depending smoothly on $v_{q}$. Hence, $\Gamma$ is a smooth vector field tangent to $\scrA$ and its flow remains in $\scrA.$\hfill$\square$
\end{proof}

\section{An example}\label{sec4}
Consider a boat with a payload on the see with a position-dependent stream. The position of the boat's center of mass is modeled by the configuration manifold $\mathbb{R}^{2}$ to which we add an orientation to obtain a complete description of its location in space, so that the system total configuration manifold is $\mathbb{R}^{2}\times \mathbb{S}$ with local coordinates $q=(x,y,\theta)$. The sea's current is modeled by the vector field $C:\mathbb{R}^{2}\rightarrow \mathbb{R}^{2}$, $C=(C^{1}(x,y), C^{2}(x,y))$.

The boat is well modeled by a forced mechanical system with Lagrangian function
$L= \frac{m}{2}(\dot{x}^{2} + \dot{y}^{2}) + \frac{I}{2}\dot{\theta}^{2},$
where $m$ is the boat's mass, $I$ is the moment of inertia, and the external force is denoted by
$F^{ext} = W^{1} dx + W^{2} dy$
accounting for the action of the current on the center of mass of the boat and to which we add a control force
$F=u(\sin \theta dx - \cos \theta dy + d\theta ).$

The functions $W^{1}$ and $W^{2}$ are defined according to
$$\begin{cases}
    W^{1} &=  m  \ d \left(\sin^{2}\theta C^{1} - \sin\theta\cos\theta C^{2}\right)(\dot{q}) \\
    W^{2} &= m \ d \left(-\sin\theta\cos\theta C^{1} + \cos^{2}\theta C^{2} \right)(\dot{q}).
\end{cases},$$
where $d$ represents the differential of the functions inside the parenthesis. The external force assures that in the absence of controls, the dynamics of the boat satisfies the following kinematic equations
$$\begin{cases}
    \dot{x} = & \sin^{2}\theta C^{1} - \sin\theta\cos\theta C^{2} \\
    \dot{y} = & -\sin\theta\cos\theta C^{1} + \cos^{2}\theta C^{2},
\end{cases}$$
whenever the initial velocities in the $x$ and $y$ direction vanish. The corresponding controlled forced Lagrangian system is
\begin{equation*}
    m\ddot{x}=u \sin\theta + W^{1}, \quad m\ddot{y}=-u \cos\theta + W^{2}, \quad I\ddot{\theta}=u,
\end{equation*}
and, as we will show, it has the following virtual affine nonholonomic constraint
$$\sin\theta \dot{x} - \cos\theta \dot{y}= C^{2}\cos\theta - C^{1}\sin\theta.$$
The input distribution $\mathcal{F}$ is generated just by one vector field $$Y=\frac{\sin \theta}{m}\frac{\partial}{\partial x}-\frac{\cos \theta}{m}\frac{\partial}{\partial y}+\frac{1}{I}\frac{\partial}{\partial \theta},$$
while the virtual nonholonomic constraint is the affine space $\mathcal{A}$ modelled on the distribution $\mathcal{D}$ defined as the set of tangent vectors $v_{q}\in T_{q}Q$  where $\mu(q)(v)=0,$ with $\mu=\sin\theta dx - cos\theta dy$. Thus, we may write it as
$$\mathcal{D}=\hbox{span}\Big{\{} X_{1}=\cos \theta\frac{\partial}{\partial x} + \sin\theta \frac{\partial}{\partial y},\, X_{2}=\frac{\partial}{\partial \theta}\Big{ \}}.$$
The affine space is given as the zero set of the function $\phi(q,v) = \mu(q)(v) + Z(q)$ with
$Z(q) = \cos \theta C^{2}(x,y) - \sin \theta C^{1}(x,y)$
or, equivalently, as the set of vectors $v_{q}$ satisfying $v_{q}-C(q)\in \mathcal{D}_{q}$.

We may check that $\mathcal{A}$ is controlled invariant for the controlled Lagrangian system above. In fact, the control law
$\hat{u}(x,y,\theta,\dot{x},\dot{y},\dot{\theta})=-m\dot{\theta}(\cos\theta \dot{x} +\sin \theta \dot{y})$
makes the affine space invariant under the closed-loop system, since in this case, the dynamical vector field arising from the controlled Euler-Lagrange equations given by
$$\Gamma = \dot{x}\frac{\partial}{\partial x} + \dot{y}\frac{\partial}{\partial y} + \dot{\theta}\frac{\partial}{\partial \theta} + \left(\frac{\hat{u}\sin \theta + W^{1}}{m}\right)\frac{\partial}{\partial \dot{x}} + \left( - \frac{\hat{u}\cos \theta - W^{2}}{m} \right)\frac{\partial}{\partial \dot{y}} + \frac{\hat{u}}{I}\frac{\partial}{\partial \dot{\theta}}$$
is tangent to $\mathcal{A}$. This is deduced from the fact that $$\Gamma(\sin\theta \dot{x} - \cos\theta \dot{y} + \cos \theta C^{2}(x,y) - \sin \theta C^{1}(x,y))=0.$$


\begin{thebibliography}{8}
\bibitem{virtual}
A. Anahory Simoes, E. Stratoglou, A. Bloch, L. Colombo. Virtual Nonholonomic Constraints: A Geometric Approach. arXiv e-prints, arXiv: 2207.01299 (2022)

\bibitem{Sansonetto} F. Fasso, N. Sansonetto. Conservation of energy and momenta in nonholonomic systems with affine constraints. Regular and Chaotic Dynamics, 20(4), 449-462 (2015).

\bibitem{Sansonetto2}F. Fasso, L. García-Naranjo, N. Sansonetto. Moving energies as first integrals of nonholonomic systems with affine constraints. Nonlinearity, 31(3), 755 (2018).

\bibitem{griffin2015nonholonomic} B. Griffin, J. Grizzle
Nonholonomic virtual constraints for dynamic walking. 54th IEEE Conference on Decision and Control 4053--4060 (2015).

\bibitem{hamed2019nonholonomic} K. Hamed, A. Ames. Nonholonomic hybrid zero dynamics for the stabilization of periodic orbits: Application to underactuated robotic walking. IEEE Transactions on Control Systems Technology,
28(6), 2689--2696 (2019).


\bibitem{horn2020nonholonomic}
J. Horn, A. Mohammadi, K. Hamed, R. Gregg. Nonholonomic virtual constraint design for variable-incline bipedal robotic walking. IEEE Robotics and Automation Letters, 5(2),
3691--3698 (2020).

\bibitem{horn2018hybrid} J. Horn, A. Mohammadi, K. Hamed, R. Gregg. Hybrid zero dynamics of bipedal robots under nonholonomic virtual constraints. IEEE Control Systems Letters, 3(2), 386--391 (2018).

\bibitem{horn2021nonholonomic} J. Horn, R.Gregg. Nonholonomic Virtual Constraints for Control of Powered Prostheses Across Walking Speeds. IEEE Transactions on Control Systems Technology. (2021).


\bibitem{moran2021energy}
A. Moran-MacDonald. Energy injection for mechanical systems through the method of Virtual Nonholonomic Constraints. University of Toronto (2021).



\end{thebibliography}
\end{document}